\newtheorem{theorem}{Theorem}[section]
\newtheorem{lemma}[theorem]{Lemma}
\newtheorem*{T1}{Theorem 1}
\newtheorem*{T2}{Theorem 2}
\theoremstyle{definition}
\theoremstyle{remark}
\numberwithin{equation}{section}
\begin{document}

\title{The Expected Perimeter in Eden and Related Growth Processes}

\author{Gabriel Bouch}

\address{Department of Mathematics, Rutgers University, Piscataway, New Jersey 08854}

\begin{abstract}
Following Richardson and using results of Kesten on First-passage percolation, we obtain an upper bound on the expected perimeter in an Eden Growth Process. Using results of the author from a problem in Statistical Mechanics, we show that the average perimeter of the lattice animals resulting from a very natural family of ``growth histories" does not obey a similar bound.
\end{abstract}

\maketitle

\section{Introduction}
An Eden growth process on $\mathbb{Z}^{d}$ is a discrete-time Markov process. The state space at step $n$ is the collection of all connected subsets of $\mathbb{Z}^{d}$ (or \textit{lattice animals}) of size $n+1$ containing the origin. Given a lattice animal $L(n-1)$ containing $n$ lattice points, the possible lattice animals at step $n$ are those which can be realized by adding a lattice point from the perimeter of $L(n-1)$. (Here a lattice point $y \notin L(n-1)$ is on the perimeter of $L(n-1)$ if there exists $x \in L(n-1)$ such that $\{x,y\}$ is a nearest-neighbor pair. We will also call such a nearest-neighbor pair a \textit{perimeter edge}.) In Eden's original formulation (which is the only one we will consider here), the probability of choosing any particular lattice point $y$ on the perimeter is 

\begin{equation}
\frac{\text{number of perimeter edges containing $y$}}{\text{total number of perimeter edges for $L(n-1)$}} \, .
\end{equation}
\\

Computer simulations of a two-dimensional Eden growth process demonstrate that the typical lattice animal containing a large number of lattice points grown by such a method is very nearly a ball \cite{Stauffer}. In addition, if the radius of this ``ball'' is $t$, then simulations also suggest that nearly all of the perimeter sites are contained in a surface layer of thickness $~t^{\frac{1}{3}}$. It is straightforward to turn this estimate of the surface layer thickness into an upper bound on the expected perimeter. Results of Kesten \cite{Kesten} on first-passage percolation and a method of Richardson \cite{Richardson} for associating an Eden growth process with a continuous-time process imply an upper bound on the thickness of the surface layer, and so we are able to demonstrate the following.

\begin{T1}
 The expected perimeter in a $d$-dimensional Eden growth process is bounded above by $Kn^{1-\frac{1}{d(2d+5) +1}}$ for some constant $K$.
\end{T1}

\section{First-passage Percolation and the Eden Growth Process}
We follow \cite{Kesten} for the setup of first-passage percolation, and then mention the key results we will need. Let $(\Omega,\mathcal{F},\mathcal{P})$ be a probability space and $\{ \tau(\{ x,y \}) \, | \, x,y \in \mathbb{Z}^{d}, \text{dist}(x,y)=1 \}$ a collection of independent random variables on $\Omega$ each having the exponential distribution with rate $1$. A \textit{path} $r$ from $x \in \mathbb{Z}^{d}$ to $ y \in \mathbb{Z}^{d}$ is a finite sequence of ordered pairs $(x_{1},x_{2}),(x_{2},x_{3}), \ldots ,(x_{n-1},x_{n})$ such that $x_{1}=x$ and $x_{n}=y$. The passage time of such a path $r$ is

\begin{equation}\label{fppdef1}
 \displaystyle{T(r)(\omega) \coloneqq \sum_{i=1}^{n-1}\tau(\{ x_{i},x_{i+1} \})(\omega)} \, .
\end{equation}
The passage time from $x$ to $y$ is

\begin{equation}\label{fppdef2}
 \displaystyle{T(x,y)(\omega) \coloneqq \inf\{ T(r)(\omega) \, | \, r \text{ is a path from } x \text{ to } y \}} \, ,
\end{equation}
and we define

\begin{align}
 \tilde{B}(t)& = \{ v \, | \, T(0,v) \leq t \}\\
 B(t)& =\left\lbrace  v + x \, | \, v \in \tilde{B}(t), x \in \left[  -\frac{1}{2},\frac{1}{2} \right]^{d} \right\rbrace  \, .
\end{align}

In (Kesten reference) Kesten shows that there exists a compact convex set $B_{0} \subset \mathbb{R}^{d}$ with nonempty interior such that for $t \geq 1$:

\begin{equation}\label{kest1}
 \mathcal{P}\left\lbrace \frac{B(t)}{t} \subset \left( 1 + \frac{x}{\sqrt{t}} \right) B_{0} \right\rbrace \geq 1 - C_{1}t^{2d}e^{-C_{2}x} \hspace{0.1in} \text{if} \hspace{0.1in} x \leq \sqrt{t} \hspace{0.2in} \text{and}
\end{equation}

\begin{multline}\label{kest2}
 \mathcal{P} \left\lbrace \left( 1-C_{3}t^{-\frac{1}{2d+4}}\left( \log t \right)^{\frac{1}{d+2}} \right) B_{0} \subset \frac{B(t)}{t} \right\rbrace \\
  \geq 1 - C_{4}t^{d}\exp \left( -C_{5}t^{\frac{d+1}{2d+4}}\left( \log t \right) ^{\frac{1}{d+2}} \right) \, . 
\end{multline}
Moreover,

\begin{multline}\label{kest3}
 \mathcal{P} \left\lbrace  \left( 1 - 2C_{3}t^{-\frac{1}{2d+4}}\left( \log t \right) ^{\frac{1}{d+2}} \right) B_{0} \subset \frac{B(t)}{t} \right.\\
\left. \subset \left( 1 + C_{6}\frac{\log t}{\sqrt{t}} \right) B_{0} \hspace{0.1in} \text{for all large $t$} \right\rbrace  =1 \, .
\end{multline}
We would like to relate these results on first-passage percolation to the Eden growth process.\\

For almost every $\omega$ it is the case that for every $z \in \mathbb{Z}^{d}$, $\infty > T(0,z)=T(r)$ for some path $r$. (Otherwise, the infimum in the definition of $T(0,z)$ (see \eqref{fppdef2}) is not achieved for some $z$ and, therefore, there are infinitely many paths from $0$ to $z$ with passage time less that $T(0,z)+1$. Hence, $B(T(0,z)+1)$ is not contained in any ball. By \eqref{kest3} this can happen only for $\omega$ in a set of probability $0$. Since $\mathbb{Z}^{d}$ is countable, the result follows.) Also, it is easy to see that for almost every $\omega$, $T(r_{1}) \neq T(r_{2})$ for all pairs of paths such that $r_{1} \neq r_{2}$. Thus, to almost every $\omega$ there exists a unique increasing sequence of times $0 < t_{1} < t_{2} < \ldots$ such that:

\begin{gather}
 \lim_{n \to \infty} t_{n} = \infty \\
 |\tilde{B}(t_{1})| = 2 \\
|\tilde{B}(t_{n}) - \tilde{B}(t_{n-1})|=1 \\
|\tilde{B}(t_{n}) - \lim_{t\uparrow t_{n}}\tilde{B}(t)|=1 \, .
\end{gather}

It is clear that $\tilde{B}(t_{n})$ is a lattice animal $L(n)$ of size $n+1$ and that $t_{n}$ is the smallest $t$ for which $\tilde{B}(t)$ contains $n+1$ points. So, to almost every $\omega$ we can associate a unique sequence of lattice animals $L(0)(\omega),L(1)(\omega),L(2)(\omega),L(3)(\omega), \ldots$ such that:

\begin{enumerate}
 \item $L(0)(\omega)$ is the origin;
 \item $|L(n+1)(\omega) - L(n)(\omega)|=1$;
 \item $L(n+1)(\omega) - L(n)(\omega)$ is a perimeter vertex of $L(n)(\omega)$.
\end{enumerate}
Without loss of generality, we redefine $\Omega$ to be the set containing only those $\omega$ which satisfy the desired properties outlined in the previous two paragraphs.\\

\begin{lemma}
 The sequence of random variables $\{L(n)\}_{n=0}^{\infty}$ is an Eden growth process.
\end{lemma}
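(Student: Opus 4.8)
The plan is to prove the lemma by showing that, for every $n\ge 1$, the conditional distribution of $L(n)$ given the whole history $L(0),\dots,L(n-1)$ depends only on $L(n-1)$ and is exactly the probability displayed in the introduction; this yields the Markov property and the correct transition law at once. Fix an admissible chain $A_{0}\subsetneq A_{1}\subsetneq\dots\subsetneq A_{n-1}$ (so $A_{0}=\{0\}$, $\abs{A_{i+1}\setminus A_{i}}=1$, and $A_{i+1}\setminus A_{i}$ is a perimeter vertex of $A_{i}$), work on the event $E=\{L(i)=A_{i}:0\le i\le n-1\}$, and for a lattice animal $A$ write $g(A,y)$ for the number of perimeter edges of $A$ containing the perimeter vertex $y$ and $b(A)=\sum_{y}g(A,y)$ for its total number of perimeter edges.

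The first step is geometric. For a perimeter edge $e=\{x,y\}$ of $A_{n-1}$ with $x\in A_{n-1}$, put $\sigma_{e}=T(0,x)+\tau(e)$. Using the elementary fact that every point $w$ on a geodesic to a point $v$ satisfies $T(0,w)\le T(0,v)$ (strictly, a.s., when $w\ne v$), one checks that the geodesic attaining $t_{n}=\min_{v\notin A_{n-1}}T(0,v)$ lies in $A_{n-1}\cup\{v_{n}\}$, where $v_{n}:=L(n)\setminus L(n-1)$; hence its last edge is a perimeter edge of $A_{n-1}$ and $t_{n}=\min_{e}\sigma_{e}$, the minimum over all perimeter edges of $A_{n-1}$, attained a.s.\ uniquely, at a perimeter edge whose outer endpoint is $v_{n}$.

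The second and central step is a memorylessness computation. I would split the perimeter edges of $A_{n-1}$ into those that were already perimeter edges of $A_{n-2}$ (for which $\sigma_{e}$ is unchanged and the only thing $E$ records about $\tau(e)$ is the lower bound $\sigma_{e}>t_{n-1}$) and those issuing from $v_{n-1}$ into the complement of $A_{n-1}$ (whose clocks $\tau(e)$ have not been inspected at all). By the lack-of-memory property of the exponential law, conditionally on $E$ (and the values $t_{1},\dots,t_{n-1}$) the residuals $\{\sigma_{e}-t_{n-1}\}$, indexed by the $b(A_{n-1})$ perimeter edges of $A_{n-1}$, are i.i.d.\ rate-$1$ exponentials. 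Since the minimum of $k$ i.i.d.\ exponentials is attained at an index that is uniform on $\{1,\dots,k\}$ and independent of the minimum, combining this with the first step gives
\begin{equation*}
\mathcal{P}\bigl(L(n)=A_{n-1}\cup\{y\}\ \big|\ E\bigr)=\frac{g(A_{n-1},y)}{b(A_{n-1})},
\end{equation*}
which is Eden's prescription and depends on $L(n-1)=A_{n-1}$ alone; this is the desired conclusion.

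The part demanding care is making the memorylessness step rigorous. The clean way is to introduce the filtration generated by the growth (e.g.\ $\mathcal{F}_{s}$ containing $\tilde{B}(u)$ for $u\le s$ together with the $\tau$-values of all edges already touching $\tilde{B}(u)$), to note that each $t_{n-1}$ is a stopping time, and to verify that, relative to it, the information accumulated about any perimeter edge's clock up to $t_{n-1}$ is exactly a lower bound, so that the strong Markov property for the underlying family of independent exponential clocks applies. I expect this bookkeeping to be the only real obstacle; it can be handled by an induction on $n$, or circumvented entirely by writing the joint density of $(L(1),t_{1},\dots,L(n),t_{n})$ as an iterated integral and reading off the transition probabilities. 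Once the geometric step is in hand, either route is routine.
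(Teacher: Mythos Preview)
Your proposal is correct and rests on the same central observation as the paper: conditionally on the history $L(0),\dots,L(n-1)$, the residual times $\sigma_{e}-t_{n-1}$ over the perimeter edges $e$ of $L(n-1)$ are i.i.d.\ rate-$1$ exponentials, so the argmin is uniform and yields Eden's transition law. Your geometric step (the geodesic to the next-added vertex stays in $A_{n-1}\cup\{v_{n}\}$, hence $t_{n}=\min_{e}\sigma_{e}$) is exactly the identification the paper uses implicitly when it writes the conditioning event as the inequalities \eqref{cond1}--\eqref{cond2} and defines $Z_{j}=Y_{j}+f_{r_{j}}(\mathbf{X})-f_{n}(\mathbf{X})$; your $\sigma_{e}-t_{n-1}$ is precisely the paper's $Z_{j}$.

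The only real difference is in how the memorylessness step is made rigorous. The paper takes the route you list as your fallback: it writes the conditional probability as a ratio of integrals over the joint exponential density, factors the perimeter-edge integrals explicitly, and reads off $\mathcal{P}(Z_{j}\le t\mid\tilde{\Omega})=1-e^{-t}$ and the product formula for the joint distribution. Your primary route (filtration, $t_{n-1}$ as a stopping time, strong Markov for the independent clocks) is more conceptual and arguably cleaner once set up, but the bookkeeping you flag---verifying that the information accumulated about each perimeter clock is \emph{only} a lower bound---is exactly what the explicit integration in the paper makes transparent by conditioning first on the interior-edge variables $\mathbf{X}$. Either route works; the paper's has the virtue of being entirely self-contained, while yours explains \emph{why} the computation comes out the way it does.
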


\begin{proof}
 We will demonstrate that $L(n+1)$ depends only on $L(n)$ and not on $L(0), \ldots , L(n-1)$ and that $\mathcal{P}(\{ L(n+1)=L(n) \cup \{ v \} \})$ for some $v$ on the perimeter of $L(n)$ is given by 

\begin{equation}\label{probnextv}
\frac{\text{number of perimeter edges containing $v$}}{|p(L(n))|} 
\end{equation}
where $p(L(n))$ is the collection of perimeter edges of $L(n)$.\\

Fix a valid lattice animal evolution $l_{0},l_{1},l_{2}, \ldots , l_{n}$ where $l_{0}$ is the origin. (That is, $l_{j}$ is a lattice animal consisting of $l_{j-1}$ plus a vertex on the perimeter of $l_{j-1}$.) For any lattice animal $L$, we make the following definition.

\begin{equation}
 E^{\text{int}}(L) \coloneqq \{ e=\{ v_{1},v_{2} \} \hspace{0.08in} \text{an edge} \, | \, v_{1},v_{2} \in L \}
\end{equation}
If $|E^{\text{int}}(l_{n})|=N$, then define $X_{i} \coloneqq \tau(e_{i})$, the passage time of the $i^{\text{th}}$ interior edge, $i=1, \ldots ,N$ for some enumeration of the edges in $E^{\text{int}}(l_{n})$. Similarly, if $|p(l_{n})|=M$, then define $Y_{j} \coloneqq \tau(g_{j})$, $j=1, \ldots ,M$ for some enumeration of the edges in $p(l_{n})$. Define $v_{s} \coloneqq l_{s} - l_{s-1}$ for $s=1, \ldots, n$ and $v_{0} \coloneqq 0$ (the origin). Then, the conditions $L(1)(\omega)=l_{1}, \ldots ,L(n)(\omega)=l_{n}$ can be written as conditions on $\textbf{X} \coloneqq (X_{1}, \ldots,X_{N})$ and $\textbf{Y} \coloneqq (Y_{1},\ldots,Y_{M})$.\\

Define

\begin{equation}
 f_{s}(\textbf{X})\coloneqq \min \left\lbrace X_{i_{1}} + \cdots + X_{i_{k}} \, | \, (e_{i_{1}}, \dots ,e_{i_{k}}) \hspace{0.08in} \text{is a path in $l_{s}$ joining $0$ to $v_{s}$} \right\rbrace \, .
\end{equation}
The first set of conditions is

\begin{equation}\label{cond1}
 f_{1}(\textbf{X}) < f_{2}(\textbf{X}) < \ldots < f_{n}(\textbf{X}) \, .
\end{equation}
For each $g_{j} \in p(l_{n})$, we have the additional conditions

\begin{equation}\label{cond2}
 Y_{j}+f_{r_{j}}(\textbf{X}) > f_{n}(\textbf{X}), \text{where } g_{j}=\{v_{r_{j}},w_{j}\}, w_{j} \in p(l_{n}) \, .
\end{equation}

Define $Z_{j} \coloneqq Y_{j}+f_{r_{j}}(\textbf{X})-f_{n}(\textbf{X})$. Let $\tilde{\Omega} \subset \Omega$ be the set where \eqref{cond1} and \eqref{cond2} are satisfied. We want to calculate

\begin{equation*}
 \mathcal{P}(Z_{j} \leq t \, | \, \tilde{\Omega}) \hspace{0.1in} \text{and} \hspace{0.1in} \mathcal{P}(Z_{1} \leq t_{1}, \ldots , Z_{M} \leq t_{M} \, | \, \tilde{\Omega}) \, .
\end{equation*}
By definition

\begin{equation}
 \mathcal{P}(Z_{j} \leq t \, | \, \tilde{\Omega}) = \frac{\mathcal{P}((Z_{j} \leq t) \cap \tilde{\Omega})}{\mathcal{P}(\tilde{\Omega})} = \frac{\mathbb{E}(\chi_{[Z_{j} \leq t] \cap \tilde{\Omega}})}{\mathbb{E}(\chi_{\tilde{\Omega}})} \, .
\end{equation}

Let $A$ be the (Borel) subset of $\mathbb{R}_{+}^{N} \times \mathbb{R}_{+}^{M}$ where the following inequalities are satisfied:

\begin{gather}
 0 < f_{1}(x) < \ldots < f_{n}(x) \\
 0 < y_{j} + f_{r_{j}}(x)-f_{n}(x) \leq t\\
 0 < y_{k} + f_{r_{k}}(x)-f_{n}(x) \hspace{0.1in} \text{for} \hspace{0.08in} k \in \{1, \ldots, M\} - \{j\} \, 
\end{gather}
and let $B=\{x \in \mathbb{R}_{+}^{N} \, | \, 0 < f_{1}(x) < \ldots < f_{n}(x) \}$.
Then,

\begin{align}
 \mathbb{E}(\chi_{[Z_{j} \leq t] \cap \tilde{\Omega}})& =\int_{\mathbb{R}_{+}^{N} \times \mathbb{R}_{+}^{M}} \chi_{A}e^{-x_{1}-\ldots-x_{N}-y_{1}-\ldots-y_{M}} \, dx \, dy\\
& =\int_{\mathbb{R}_{+}^{N}}e^{-x_{1}-\ldots-x_{N}} \left( \int_{\mathbb{R}_{+}^{M}} \chi_{A}(x,y)e^{-y_{1}-\ldots-y_{M}} \, dy \right) \, dx\\
&=\int_{B}e^{-x_{1}-\ldots-x_{N}} \left[  \left( \int_{f_{n}(x)-f_{r_{j}}(x)}^{t+f_{n}(x)-f_{r_{j}}(x)}e^{-y_{j}} \, dy_{j} \right) \left( \prod_{k \neq j} \int_{f_{n}(x)-f_{r_{k}}(x)}^{\infty} e^{-y_{k}} \, dy_{k} \right) \right]  \, dx\\
&=\int_{B}e^{-x_{1}-\ldots-x_{N}} e^{-\sum_{k \neq j}\left[ f_{n}(x) - f_{r_{k}}(x) \right] }\left[ e^{-\left[ f_{n}(x)-f_{r_{j}}(x)\right] } - e^{-\left[ t+f_{n}(x)-f_{r_{j}}(x)\right] }\right] \, dx\\
&=\left( 1-e^{-t} \right) \int_{B}e^{-x_{1}-\ldots-x_{N}} e^{-\sum_{k=1}^{M}\left[ f_{n}(x) - f_{r_{k}}(x) \right] } \, dx
\end{align}

Now let $C$ be the (Borel) subset of $\mathbb{R}_{+}^{N} \times \mathbb{R}_{+}^{M}$ where the following inequalities are satisfied:

\begin{gather}
 0 < f_{1}(x) < \ldots < f_{n}(x) \\
 0 < y_{k} + f_{r_{k}}(x)-f_{n}(x) \hspace{0.1in} \text{for} \hspace{0.08in} k \in \{1, \ldots, M\} \, .
\end{gather}
Then,

\begin{align}
 \mathbb{E}(\chi_{ \tilde{\Omega}})& =\int_{\mathbb{R}_{+}^{N} \times \mathbb{R}_{+}^{M}} \chi_{C}(x,y)e^{-x_{1}-\ldots-x_{N}-y_{1}-\ldots-y_{M}} \, dx \, dy\\
 &=\int_{B} e^{-x_{1}- \ldots - x_{N}} \left( \prod_{k=1}^{M} \int_{f_{n}(x)-f_{r_{k}}(x)}^{\infty} e^{-y_{k}} \, dy_{k} \right) \, dx\\
 &=\int_{B}e^{-x_{1}-\ldots-x_{N}} e^{-\sum_{k=1}^{M}\left[ f_{n}(x) - f_{r_{k}}(x) \right] } \, dx
\end{align}

Thus,

\begin{equation}\label{prob1}
\mathcal{P}(Z_{j} \leq t \, | \, \tilde{\Omega})=\left( 1-e^{-t} \right)  .
\end{equation}

A completely analogous calculation gives

\begin{equation}\label{prob2}
\mathcal{P}(Z_{1} \leq t_{1}, \ldots , Z_{M} \leq t_{M} \, | \, \tilde{\Omega})=\prod_{k=1}^{M} \left( 1-e^{-t_{k}} \right).
\end{equation}
Now note that the random variable $Z_{k}$ is the expected additional waiting time (beyond the time the $(n+1)^{\text{st}}$ site was added to the lattice animal evolution) for a path containing the perimeter edge $g_{k}$ to be traversed. Equations \eqref{prob1} and \eqref{prob2} show that these waiting times are i.i.d. and depend only on $l_{n}$. Thus, if $v$ is a perimeter vertex for $L(n)$, the probability that $L(n+1)=L(n) \cup \{v\}$ is given by \eqref{probnextv}. This proves the lemma.

\end{proof}

\section{An Upper Bound on the Expected Perimeter}
Now that we have this link between first-passage percolation and Eden growth processes, we will follow ideas of Davidson and use Kesten's results to say something about the expected perimeter of the lattice animals in an Eden growth process. The strategy is the following. For large $n$, we will find times $t_{1}$ and $t_{2}$ between which $\tilde{B}(t)$ is overwhelmingly likely to contain $n$ lattice points. We will then show that between these two times, the boundary sites of $\tilde{B}(t)$ are, with probability nearly $1$, contained in a region of very small volume. The desired result will follow.\\

\begin{lemma}
 Let $s_{1}(n)=\inf \left\lbrace s \, | \, sB_{0} \, \, \text{intersects $n$ unit cubes} \right\rbrace $. (Here the unit cubes are centered at points of $\mathbb{Z}^{d}$.) Then there exists a constant $K_{1}$ depending only on $B_{0}$ and the dimension $d$ of the space such that, for large $n$, $\left( s_{1}(n)+K_{1}\right) B_{0}$ contains at least $n$ cells.
\end{lemma}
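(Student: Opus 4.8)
The plan is to compare the convex body $s_1(n) B_0$ with a slightly inflated copy of itself and to show that the extra shell $(s_1(n)+K_1)B_0 \setminus s_1(n)B_0$ necessarily picks up at least as many new unit cubes as the number of cubes that $s_1(n)B_0$ fails to reach — but in fact we want the stronger statement that the inflated body meets \emph{all} the cubes that $s_1(n)B_0$ meets (automatic, by monotonicity of dilation of a convex body containing the origin in its interior) plus enough more to make up any deficit. Since $s_1(n)B_0$ by definition meets at least $n$ cubes already, the only work is to confirm that inflating by a bounded amount does not \emph{lose} cubes; this is immediate because $B_0$ is compact convex with nonempty interior, so $0$ can be taken (after a harmless translation, or by Kesten's setup $0 \in B_0$) in the interior and $sB_0 \subset s'B_0$ for $s \le s'$. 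Hence the real content must be a uniform-in-$n$ statement, and I read the lemma as asserting that the \emph{count} $|{(s_1(n)+K_1)B_0}| \ge n$ with $K_1$ independent of $n$, which given $|s_1(n)B_0| \ge n$ is trivial. So the substance is elsewhere: presumably one wants $s_1(n)$ itself to grow, and $K_1$ additive, so that later $(s_1(n)+K_1)B_0$ can be used as an outer bound in the Eden comparison. I will therefore prove the clean, useful fact: \emph{the number of unit cubes met by $(s+K_1)B_0$ is a nondecreasing function of $s$, and for $s = s_1(n)$ it is at least $n$.}

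First I would fix a ball $B(0,\rho) \subset B_0 \subset B(0,R)$ with $0 < \rho \le R$, which exists since $B_0$ is compact, convex, with nonempty interior (translating $B_0$ if necessary so that the origin is interior; Kesten's $B_0$ already has this property). Then for $0 \le s \le s'$ we have $sB_0 \subseteq s'B_0$: indeed if $x \in sB_0$ then $x = s b$ with $b \in B_0$, and $x = s'((s/s')b + (1-s/s')\cdot 0)$ is a convex combination of $b \in B_0$ and $0 \in B_0$, hence $x/s' \in B_0$. Consequently the family of cubes met by $sB_0$ is nondecreasing in $s$, so $s \mapsto N(s) := |\{\text{unit cubes centered in } \mathbb{Z}^d \text{ meeting } sB_0\}|$ is a nondecreasing (integer-valued, right-continuous) function of $s$.

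Next I would unpack the definition $s_1(n) = \inf\{s : N(s) \ge n\}$. Because $N$ is nondecreasing and jumps only by finite amounts at the (discrete) values of $s$ where $sB_0$ first touches a new cube, the infimum is attained: $N(s_1(n)) \ge n$. (If one is worried about a boundary-touching cube that is met exactly at $s = s_1(n)$ but not for $s$ slightly less, this only helps, since we are claiming $\ge n$, not $= n$.) Therefore already $N(s_1(n)) \ge n$, and by monotonicity $N(s_1(n) + K_1) \ge N(s_1(n)) \ge n$ for \emph{any} $K_1 \ge 0$; taking, say, $K_1 = 1$ (or $K_1 = \sqrt d / \rho$, a natural choice guaranteeing the shell $(s_1(n)+K_1)B_0 \setminus s_1(n)B_0$ has width at least one cube diameter in every direction, should a strict inequality or an interior–not–just–meeting containment be wanted later) gives a constant depending only on $B_0$ and $d$, as required.

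The only genuine subtlety — and the step I expect to need the most care — is whether the lemma intends ``intersects $n$ unit cubes'' to mean mere nonempty intersection (possibly on the boundary) or full containment of the cube; in the former reading the argument above is essentially immediate, while in the latter one must quantify how much inflation converts ``meets a cube'' into ``contains that cube.'' In the containment reading, the key estimate is: if $sB_0$ meets the cube $Q_z$ centered at $z \in \mathbb Z^d$, then $\mathrm{dist}(z, sB_0) \le \sqrt d/2$, so $z \in sB_0 + B(0,\sqrt d/2) \subseteq sB_0 + (\sqrt d/(2\rho)) B_0$, and since $sB_0 + \lambda B_0 = (s+\lambda)B_0$ by convexity, we get $z \in (s + \sqrt d/(2\rho))B_0$; inflating once more by $\sqrt d/(2\rho)$ pushes the whole cube $Q_z = z + [-\tfrac12,\tfrac12]^d$ inside $(s + \sqrt d/\rho)B_0$, again using $[-\tfrac12,\tfrac12]^d \subseteq (\sqrt d/(2\rho))B_0$. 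Thus $K_1 = \sqrt d/\rho$ works and depends only on $\rho$ (hence on $B_0$) and on $d$. Either way the conclusion follows; I would state it with $K_1$ taken large enough to cover the containment interpretation so that the lemma can be invoked freely downstream.
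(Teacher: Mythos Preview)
Your second reading is the correct one: the lemma is precisely the statement that a bounded additive inflation converts ``$sB_0$ \emph{meets} $n$ cubes'' into ``$(s+K_1)B_0$ \emph{contains} $n$ cubes.'' The first half of your write-up, where you entertain the trivial monotonicity reading, is a detour and should be cut; once you commit to the containment reading your argument is complete and correct. The two ingredients you isolate --- $B(0,\rho)\subset B_0$ and the Minkowski-sum identity $sB_0+\lambda B_0=(s+\lambda)B_0$ for convex $B_0$ --- give the result in two lines with $K_1=\sqrt d/\rho$.

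The paper's proof reaches the same conclusion by a different and somewhat heavier route: it fixes an inscribed ball $B_1$ of radius $\rho_1$, invokes the Lipschitz continuity (with constant $C$) of the radial projection $\Phi:\partial B_1\to\partial B_0$, and for each point $x$ just outside $B_0$ in a cube of side $1/t$ meeting $\partial B_0$ shows that the scalar $\alpha$ with $\alpha\,\Phi(\rho_1 x/\|x\|)=x$ satisfies $\alpha\le 1+(C+1)\sqrt d/(\rho_1 t)$; rescaling by $t$ gives $K_1=(C+1)\sqrt d/\rho_1$. Your Minkowski-sum argument bypasses the Lipschitz map entirely and yields a cleaner constant ($\sqrt d/\rho$ versus $(C+1)\sqrt d/\rho_1$), at the cost of relying on the identity $sB_0+\lambda B_0=(s+\lambda)B_0$, which you should state and justify explicitly since it is the crux. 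The paper's approach, by contrast, makes the geometry of the boundary explicit and would adapt more readily if one needed finer control of the annulus width direction by direction.
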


\begin{proof}
 Let $B_{1}$ be a closed ball centered at the origin and contained in the interior of $B_{0}$. Let $\rho_{1}$ be the radius of $B_{1}$. It is well-known that the map $\Phi : \partial B_{1}\rightarrow \partial B_{0}$ that takes $x \in \partial B_{1}$ to a positive scalar multiple of itself is a bijective Lipschitz continuous map. Let $C$ be the Lipschitz constant. Tessellate $\mathbb{R}^{d}$ with cubes having edges of length $\frac{1}{t}$, where $\frac{1}{t}$ is less than the distance from $B_{1}$ to $\partial B_{0}$, the edges are parallel to the coordinate axes and such that the origin of $\mathbb{R}^{d}$ is the center of some cube. (In other words, just rescale the tiling of $\mathbb{R}^{d}$ referred to in the statement of the lemma by a factor of $\frac{1}{t}$.) We would like to know the smallest $\alpha$ such that $\alpha B_{0}$ contains every cube that intersects $B_{0}$.\\

Let $M_{1}$ be a cube that intersects the boundary of $B_{0}$, but is not contained in $B_{0}$. Suppose $x \in M_{1} \cap B_{0}^{c}$, and let $y \in M_{1} \cap \partial B_{0}$. Then $  \left\lVert  x - y   \right\rVert   < \frac{\sqrt{d}}{t}$ and

\begin{equation}
   \left\lVert  \frac{\rho_{1}}{  \left\lVert  x   \right\rVert  } x - \frac{\rho_{1}}{  \left\lVert  y   \right\rVert  } y   \right\rVert   < \frac{\sqrt{d}}{t} \, .
\end{equation}
Further,

\begin{equation}
 C \frac{\sqrt{d}}{t} \geq  \left\lVert  \Phi\left( \frac{\rho_{1}}{  \left\lVert  x   \right\rVert  } x \right) - \Phi\left( \frac{\rho_{1}}{  \left\lVert  y   \right\rVert  } y \right)   \right\rVert   =   \left\lVert  \Phi\left( \frac{\rho_{1}}{  \left\lVert  x   \right\rVert  } x \right) - y   \right\rVert \, ,
\end{equation}
which implies

\begin{equation}
 \left\lVert x - \Phi\left( \frac{\rho_{1}}{  \left\lVert  x   \right\rVert  } x \right) \right\rVert \leq \frac{\sqrt{d}}{t} + C \frac{\sqrt{d}}{t} = \left( C + 1 \right) \frac{\sqrt{d}}{t} \, .
\end{equation}
Also $\left\lVert \Phi\left( \frac{\rho_{1}}{  \left\lVert  x   \right\rVert  } x \right) \right\rVert > \rho_{1}$. Thus, if $\alpha \Phi\left( \frac{\rho_{1}}{  \left\lVert  x   \right\rVert  } x \right) = x$, then

\begin{align}
 \alpha &= \frac{\left\lVert \Phi\left( \frac{\rho_{1}}{  \left\lVert  x   \right\rVert  } x \right) \right\rVert + \left\lVert x - \Phi\left( \frac{\rho_{1}}{  \left\lVert  x   \right\rVert  } x \right) \right\rVert}{\left\lVert \Phi\left( \frac{\rho_{1}}{  \left\lVert  x   \right\rVert  } x \right) \right\rVert}\\
& \leq 1 + \frac{\left( C + 1 \right) }{\rho_{1}} \frac{\sqrt{d}}{t} \, .
\end{align}

If we now stretch by a factor of $t$, then $t\left( 1 + \frac{\left( C + 1 \right) }{\rho_{1}} \frac{\sqrt{d}}{t} \right) B_{0}$ will contain every unit cube that intersects $tB_{0}$. Letting $K_{1} \coloneqq \frac{(C+1)\sqrt{d}}{\rho_{1}}$, the lemma is proved.
\end{proof}

\begin{T1}
 In an Eden growth process, the expected perimeter of the lattice animal containing $n$ lattice points is bounded above by $Kn^{1-\frac{1}{d(2d+5) +1}}$ for some constant $K$.
\end{T1}

\begin{proof}
 Let $s_{1}$ be the smallest $s$ such that $sB_{0}$ intersects $n$ cells. Then $s_{1}B_{0}$ contains less than $n$ lattice points and, by the previous lemma, $(s_{1}+K_{1})B_{0}$ contains at least $n$ cells, hence, at least $n$ lattice points. So, if $s_{2}$ is such that $\text{vol}(s_{2}B_{0})=n$, then

\begin{equation}\label{s1s2s3}
 s_{1} \leq s_{2} \leq s_{1} + K_{1} \eqqcolon s_{3} \, .
\end{equation}

To simplify the notation we make the following definitions:

\begin{align}
 f(t) &\coloneqq C_{3}t^{-\frac{1}{2d+4}}\left( \log t \right)^{\frac{1}{d+2}}\\
 g(t) &\coloneqq C_{1}t^{2d}e^{-C_{2}t^{\frac{1}{4}}}\\
 h(t) &\coloneqq C_{4}t^{d} \exp \left( -C_{5}t^{\frac{d+1}{2d+4}} \left( \log t \right) ^{\frac{1}{d+2}} \right) \, .
\end{align}
We also set $x$ in \eqref{kest1} equal to $t^{\frac{1}{4}}$. With this inequality in mind, we want to find $t_{1}$ such that

\begin{equation}
 t_{1}\left( 1 + t_{1}^{-\frac{1}{4}} \right) = s_{1} \, .
\end{equation}
It is straightforward to check that, for large enough $s_{1}$,

\begin{equation}\label{t1ineq}
 \displaystyle{s_{1}-s_{1}^{\frac{3}{4}} < t_{1} < s_{1} - s_{1}^{\frac{3}{4}} + \frac{3}{4}s_{1}^{\frac{1}{2}}} \, .
\end{equation}

We would like to adjust \eqref{kest2} slightly to have something that will be easier to calculate with. Choose a constant $\tilde{C}_{3}$ such that

\begin{equation}
 C_{3}t^{-\frac{1}{2d+4}}\left( \log t \right)^{\frac{1}{d+2}} \leq \tilde{C}_{3}t^{-\frac{1}{2d+5}} \hspace{0.08in} \text{for all $t \geq 1$}.
\end{equation}

Then, \eqref{kest2} can be replaced with

\begin{equation}\label{kest2a}
 \mathcal{P} \left\lbrace \left( 1- \tilde{C}_{3}t^{-\frac{1}{2d+5}}\right) B_{0} \subset \frac{B(t)}{t} \right\rbrace \geq 1 - C_{4}t^{d}\exp \left( -C_{5}t^{\frac{d+1}{2d+4}}\left( \log t \right) ^{\frac{1}{d+2}} \right) \, . 
\end{equation}

In light of \eqref{kest2}, we want to find $t_{2}$ such that

\begin{equation}
 t_{2} \left( 1-\tilde{C}_{3}t_{2}^{-\frac{1}{2d+5}} \right) = s_{3} \, .
\end{equation}
Again it is straightforward to check that, for large enough $s_{3}$,

\begin{equation}\label{t2ineq}
 t_{2} < s_{3} + \tilde{C}_{3}s_{3}^{\frac{2d+4}{2d+5}} + 2\left( \tilde{C}_{3} \right) ^{2} \left( \frac{2d+4}{2d+5} \right) s_{3}^{\frac{2d+3}{2d+5}} \, .
\end{equation}

So, with probability at least $ 1-g(t_{1}) -h(t_{2}) $, our continuously evolving lattice animal will have exactly $n$ lattice points at some time between $t_{1}$ and $t_{2}$. Further, 

\begin{equation}\label{annulus1}
 t_{1} \left( 1- \tilde{C}_{3} t_{1}^{-\frac{1}{2d+5}} \right) B_{0} \subset B(t_{1}) \subset t_{1} \left( 1 + t_{1}^{-\frac{1}{4}} \right) B_{0}
\end{equation}
with probability at least $1-g(t_{1})-h(t_{1})$, and

\begin{equation}\label{annulus2}
 t_{2} \left( 1- \tilde{C}_{3} t_{2}^{-\frac{1}{2d+5}} \right) B_{0} \subset B(t_{2}) \subset t_{2} \left( 1 + t_{2}^{-\frac{1}{4}} \right) B_{0}
\end{equation}
with probability at least $1-g(t_{2})-h(t_{2})$. So, both \eqref{annulus1} and \eqref{annulus2} are both satisfied with probability at least $1-g(t_{1})-g(t_{2})-h(t_{1})-h(t_{2})$. For the Eden growth process defined on the same probability space, we can conclude that the lattice animal $L(n-1)$ will contain all the lattice points contained in $t_{1} \left( 1 - \tilde{C}_{3}t_{1}^{-\frac{1}{2d+5}} \right) B_{0}$ and will be contained in $t_{2} \left( 1 + t_{2}^{-\frac{1}{4}} \right) B_{0}$ with probability at least $1-g(t_{1})-g(t_{2})-h(t_{1})-h(t_{2})$.\\

Therefore, for large $n$, with probability at least $1-g(t_{1})-g(t_{2})-h(t_{1})-h(t_{2})$, all of the boundary lattice points of $L(n-1)$ are contained in

\begin{equation}
 \left[ t_{2} \left( 1 + t_{2}^{-\frac{1}{4}} \right)+K_{1}\right] B_{0} \diagdown \left[ t_{1} \left( 1 - \tilde{C}_{3}t_{1}^{-\frac{1}{2d+5}} \right)-K_{1}\right] B_{0} \, .
\end{equation}
So, the number of boundary lattice points in $L(n-1)$ is bounded above by

\begin{equation}
 \text{vol}\left\lbrace \left[ t_{2} \left( 1 + t_{2}^{-\frac{1}{4}} \right)+2K_{1}\right] B_{0} \right\rbrace - \text{vol}\left\lbrace \left[ t_{1} \left( 1 - \tilde{C}_{3}t_{1}^{-\frac{1}{2d+5}} \right)-2K_{1}\right] B_{0} \right\rbrace \, .
\end{equation}

We note that \eqref{s1s2s3} implies that

\begin{equation}
 s_{3} \leq s_{2}+K_{1} = \left[ \frac{n}{\text{vol}(B_{0})} \right] ^{\frac{1}{d}} + K_{1}
\end{equation}
and

\begin{equation}
 s_{1} \geq s_{2} - K_{1} = \left[ \frac{n}{\text{vol}(B_{0})} \right] ^{\frac{1}{d}} - K_{1} \, .
\end{equation}

Using also \eqref{t1ineq} and \eqref{t2ineq} and performing straightforward but somewhat tedious calculations, we find that, for large enough $n$, with probability at least $1-g(t_{1})-g(t_{2})-h(t_{1})-h(t_{2})$, the number of boundary lattice points in $L(n-1)$ is bounded above by

\begin{equation}
 K_{2} \left[ \frac{n}{\text{vol}(B_{0})} \right] ^{1-\frac{1}{d(2d+5)}}
\end{equation}
for some constant $K_{2}$. Therefore, the number of perimeter edges is bounded above by

\begin{equation}
 2d K_{2} \left[ \frac{n}{\text{vol}(B_{0})} \right] ^{1-\frac{1}{d(2d+5)}} \, .
\end{equation}
\\
For any lattice animal in $\mathbb{Z}^{d}$ with $n$ lattice points it is always the case that the number of perimeter edges is bounded above by $2dn$. For large $t$, $g(t) > h(t)$. So, for large $n$,

\begin{equation}
 1-g(t_{1})-g(t_{2})-h(t_{1})-h(t_{2}) > 1 -4g(t_{1}) > 1 - 4e^{-t_{1}^{\frac{1}{5}}} > 1 - 4e^{-\left[ \frac{1}{2} \left( \frac{n}{\text{vol}(B_{0})} \right) ^{\frac{1}{d}} \right] ^{\frac{1}{5}}}\, .
\end{equation}
Thus, for large $n$, the expected perimeter of $L(n-1)$ is bounded above by

\begin{multline}
 1 - 4e^{-\left[ \frac{1}{2} \left( \frac{n}{\text{vol}(B_{0})} \right) ^{\frac{1}{d}} \right] ^{\frac{1}{5}}} \cdot 2dK_{2} \left[ \frac{n}{\text{vol}(B_{0})} \right] ^{1-\frac{1}{d(2d+5)}} + 4e^{-\left[ \frac{1}{2} \left( \frac{n}{\text{vol}(B_{0})} \right) ^{\frac{1}{d}} \right] ^{\frac{1}{5}}} \cdot 2dn\\
< K_{3}n^{1-\frac{1}{d(2d+5) +1}}
\end{multline}
for some constant $K_{3}$. The result follows.
\end{proof}

\section{Average Perimeter Over Lattice Animal Histories}
In the context of a problem in Statistical Mechanics \cite{Bouch}, the author found it natural to consider an average perimeter that is closely related to the expected perimeter in an Eden growth process. Let $e_{1}, e_{2}, \ldots ,e_{n}$, where $e_{j}=\{x_{j}, y_{j} \}$, be a sequence of edges satisfying the following two conditions:

\begin{enumerate}
\item Either $x_{1}$ is the origin or $y_{1}$ is the origin.
\item For $2 \leq j \leq n$, exactly one of $\{x_{j}, y_{j} \}$ is in $\{x_{1}, y_{1}, \ldots , x_{j-1}, y_{j-1} \}$.
\end{enumerate}

We call such a sequence of edges a \textit{lattice animal history of length $n$}. We also define $L(e_{1}, \ldots , e_{n}) \coloneqq \{x_{1}, y_{1}, \ldots , x_{n}, y_{n} \}$, the lattice animal associated with this sequence of edges. What can we say about the average perimeter of $L(e_{1}, \ldots , e_{n})$, where the average perimeter is taken over all lattice animal histories of length $n$? In an Eden growth process, all lattice animal histories of length $n$ are not equally likely. Therefore we cannot expect the average perimeter in an Eden growth process to be the same as the average perimeter over all lattice animal histories. But should we expect this latter average perimeter to obey a bound similar to that found for the average perimeter in an Eden growth process?\\

It seems to the author that there is good reason to think so. First, it is plausible that lattice animals with small perimeters will have many more lattice trees spanning their collection of vertices than those with large perimeters. Consider, for example, the two lattice animals shown below. The first has many spanning trees while the second has only one.\\

\begin{figure}
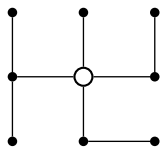

\begin{center}
$
\hspace{0.5cm}
 \begin{psmatrix}[emnode=dot,colsep=.75cm,rowsep=.4cm]
	 & & &\\
	 & [mnode=circle] & &\\
	 & & &\\
\everypsbox{\scriptstyle}
  \ncline[linewidth=.5pt]{1,2}{2,2}
  \ncline[linewidth=.5pt]{1,1}{2,1}
  \ncline[linewidth=.5pt]{1,3}{2,3}
  \ncline[linewidth=.5pt]{2,1}{2,2}
  \ncline[linewidth=.5pt]{2,2}{2,3}
  \ncline[linewidth=.5pt]{2,1}{3,1}
  \ncline[linewidth=.5pt]{2,2}{3,2}
  \ncline[linewidth=.5pt]{3,2}{3,3}
 \end{psmatrix}
$
\caption{A Lattice Animal with ``Small'' Perimeter and Underlying Spanning Tree}
\label{latticeanimal1}
\end{center}
\end{figure}

\begin{figure}
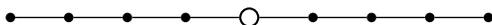

\begin{center}
$
\hspace{0.5cm}
 \begin{psmatrix}[emnode=dot,colsep=.65cm,rowsep=.4cm]
	& & & & [mnode=circle] & & & & &
\everypsbox{\scriptstyle}
  \ncline[linewidth=.5pt]{1,1}{1,2}
  \ncline[linewidth=.5pt]{1,2}{1,3}
  \ncline[linewidth=.5pt]{1,3}{1,4}
  \ncline[linewidth=.5pt]{1,4}{1,5}
  \ncline[linewidth=.5pt]{1,5}{1,6}
  \ncline[linewidth=.5pt]{1,6}{1,7}
  \ncline[linewidth=.5pt]{1,7}{1,8}
  \ncline[linewidth=.5pt]{1,8}{1,9}
 \end{psmatrix}
$
\caption{A Lattice Animal with ``Large'' Perimeter and Underlying Spanning Tree}
\label{latticeanimal2}
\end{center}
\end{figure}

In addition, many of the lattice trees that span lattice animals of small perimeter will ``branch" often. Trees that branch often will have many different orderings of their edges that form valid lattice animal histories. Consider the spanning trees of the lattice animals from the previous figure shown in figure blank. It can be shown (see \cite{Bouch}) that 1680 orderings of the edges in the first tree form lattice tree histories, while 70 orderings of the edges in the second tree form lattice tree histories.\\

With these considerations in mind, it is perhaps surprising to find the following.

\begin{T2}
 No bound of the form $\bar{p}_{n} \leq C_{1}\cdot n^{\alpha}$ with $\alpha < 1$ and $C_{1}$ independent of $n$ exists for the average perimeter taken over all lattice animal histories of length $n-1$.
\end{T2}

The proof of this theorem can be found in the author's companion work \cite{Bouch}.

\bibliographystyle{amsplain}

\end{document}